\numberwithin{equation}{section}
\numberwithin{figure}{section}
\theoremstyle{plain}
\newtheorem{thm}{\protect\theoremname}
  \theoremstyle{definition}
  \newtheorem{problem}[thm]{\protect\problemname}
  \theoremstyle{plain}
  \newtheorem{lem}[thm]{\protect\lemmaname}
  \theoremstyle{remark}
  \newtheorem*{rem*}{\protect\remarkname}
  \providecommand{\lemmaname}{Lemma}
  \providecommand{\problemname}{Problem}
  \providecommand{\remarkname}{Remark}
\providecommand{\theoremname}{Theorem}
\begin{document}

\title{Bounds For The Tail Distribution of The Sum of Digits of Prime Numbers}

\author{Eric Naslund}
\begin{abstract}
Let $s_{q}(n)$ denote the base $q$ sum of digits function, which
for $n\leq x$, is centered around $\frac{q-1}{2}\log_{q}x$. In \cite{DrmotaMauduitRivat},
they look at sum of digits of prime numbers, and provide asymptotics
for the size of the set $\left\{ p\leq x,\ p\begin{tabular}{c}
 prime\end{tabular}s_{q}(p)=\alpha\left(q-1\right)\log_{q}x\right\} $ where $\alpha$ lies in the range 
\[
\alpha\in\left(\frac{1}{2}-K\frac{\left(\log\log x\right)^{\frac{1}{2}-\epsilon}}{\sqrt{\log x}},\ \frac{1}{2}+K\frac{\left(\log\log x\right)^{\frac{1}{2}-\epsilon}}{\sqrt{\log x}}\right)
\]
 for some constant $K$. In this note, we examine the tails of this
distribution, and prove that 
\[
\left|\left\{ p\leq x,\ p\begin{tabular}{c}
 prime\end{tabular}:\ s_{q}(p)\geq\alpha\left(q-1\right)\log_{q}x\right\} \right|\gg_{\epsilon}\ x^{2\left(1-\alpha\right)}e^{-c\left(\log x\right)^{1/2+\epsilon}}
\]
for $\frac{1}{2}\leq\alpha<0.7375$. This proves that there are infinitely
many primes with more than twice as many ones than zeros in their
binary expansion. 
\end{abstract}
\maketitle

\section{Introduction}

A prime number which can be written in the form $2^{n}-1$ will have
only ones in its binary expansion, and is called a Mersenne prime.
The first few such primes are $3$, $7$, $31$, and $127$. Currently,
the largest known prime is of this form, and it has over $12.9$ million
digits. These numbers have been looked at for centuries, and date
back to Euclid who was interested in them for their connection with
perfect numbers, something that we will not explore here. It is a
long standing conjecture that there are infinitely many Mersenne primes,
and currently this seems entirely out of reach of modern analytic
methods. However, we may weaken the condition and ask about primes
with a large number of $1$'s in their base $2$ expansion . With
this in mind, we ask the following motivational question:
\begin{problem}
\label{Main Question}Are there infinitely many primes with twice
as many ones than zeros in their binary expansion?
\end{problem}
If we let $s_{q}(n)$ denote the sum of the digits of $n$ written
in base $q$, then we are asking if there are infinitely many primes
$p$ which satisfy $s_{2}(p)\geq\frac{2}{3}\log_{2}p$. Moving to
a slightly more general setting, we will look at the sum of digits
base $q$ rather than just the binary case. The average of $s_{q}(n)$
is roughly $\frac{q-1}{2}$ multiplied by the number of digits, so
we have the asymptotic 
\[
\sum_{n\leq x}s_{q}(n)\sim\frac{q-1}{2}\log_{q}x.
\]
However, things become much more complicated when we restrict ourselves
to the prime numbers. In 1946 Copeland and Erdos \cite{ErdosCopeland}
proved that 
\[
\frac{1}{\pi(x)}\sum_{p\leq x}s_{q}(p)\sim\frac{q-1}{2}\log_{q}(x)
\]
where $\pi(x)=\sum_{p\leq x}1$ is the prime counting function, and
a more precise error term was subsequently given by Shiokawa \cite{Shiokawa}.
In 2009, Drmota, Mauduit and Rivat \cite{DrmotaMauduitRivat} gave
exact asymptotics for the set 
\[
\left\{ p\leq x,\ p\begin{tabular}{c}
 prime\end{tabular}s_{q}(p)=\alpha\left(q-1\right)\log_{q}x\right\} 
\]
 where $\alpha$ lies in the range 
\[
\alpha\in\left(\frac{1}{2}-K\frac{\left(\log\log x\right)^{\frac{1}{2}-\epsilon}}{\sqrt{\log x}},\ \frac{1}{2}+K\frac{\left(\log\log x\right)^{\frac{1}{2}-\epsilon}}{\sqrt{\log x}}\right),
\]
and is chosen so that $\alpha\left(q-1\right)\log_{q}x$ is an integer
which avoids certain congruence conditions. However, these results
don't allow us to make any conclusions about problem \ref{Main Question}.
In \cite{DrmotaMauduitRivat} they also asked about finding non-trivial
bounds for the sum $\sum_{p\leq x}2^{s_{q}(p)}$, as this would yields
results regarding the tail distribution of the sum of digits of primes.
That is, lower bounds for the size of sets of primes of the form 
\[
\left\{ p\leq x,\ p\ \text{prime}:\ s_{q}(n)\geq\alpha(q-1)\log_{q}x\right\} 
\]
where $\alpha>\frac{1}{2}$. These are exactly the type of bounds
we are looking for in order to answer our question, as problem \ref{Main Question}
is the case when $\alpha=\frac{2}{3}$ and $q=2$ . In this note,
we provide such lower bounds, and prove the following:
\begin{thm}
\label{thm: Main Theorem}Given $0.2625<\beta\leq\frac{1}{2}$ and
$\frac{1}{2}\leq\alpha<0.7375$, for sufficiently large $x$ we have
that 
\[
\left|\left\{ p\leq x,\ p\ \text{prime}:\ s_{q}(n)\geq\alpha(q-1)\log_{q}x\right\} \right|\gg_{\epsilon}\ x^{2\left(1-\alpha\right)}e^{-c\left(\log x\right)^{1/2+\epsilon}}
\]
and
\[
\left|\left\{ p\leq x,\ p\ \text{prime}:\ s_{q}(n)\leq\beta(q-1)\log_{q}x\right\} \right|\gg_{\epsilon}\ x^{2\beta}e^{-c\left(\log x\right)^{1/2+\epsilon}}.
\]

\end{thm}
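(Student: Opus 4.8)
The plan is to avoid controlling the digits of a prime directly, and instead to work inside a short interval in which the high‑order base‑$q$ digits are prescribed. Write $n=\lfloor\log_q x\rfloor$ and $\delta=1-\alpha\in(0.2625,\tfrac12]$. For $0\le k<n$ every integer $p\in I_k:=[q^{n}-q^{k},\,q^{n})$ lies in $[1,x]$, has exactly $n$ base‑$q$ digits, and is the concatenation of $n-k$ digits equal to $q-1$ followed by the base‑$q$ representation of $b:=p-(q^{n}-q^{k})\in[0,q^{k})$; consequently
\[
s_q(p)=(q-1)(n-k)+s_q(b),\qquad p=q^{n}-q^{k}+b .
\]
Hence $s_q(p)\ge\alpha(q-1)\log_q x$ holds whenever $s_q(b)\ge t$, where $t=t(n,k)$ satisfies $t=(q-1)(k-\delta n)+O(1)$. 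Since $b\mapsto q^n-q^k+b$ is a bijection of $[0,q^{k})$ onto $I_k$, the number of primes in $I_k$ with $s_q(p)<\alpha(q-1)\log_q x$ is at most $\#\{b<q^{k}:\ s_q(b)<t\}$, a quantity having nothing to do with primality. I would therefore bound
\[
\#\{p\in I_k:\ p\ \mathrm{prime},\ s_q(p)\ge\alpha(q-1)\log_q x\}\ \ge\ \pi(q^{n})-\pi(q^{n}-q^{k})-\#\{b<q^{k}:\ s_q(b)<t\}
\]
and choose $k$ so that the prime count wins.

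For the prime count I would invoke the Baker--Harman--Pintz bound for primes in short intervals: if $H\ge y^{0.525}$ then $\pi(y)-\pi(y-H)\gg H/\log y$. With $y=q^{n}\le x$ and $H=q^{k}$ this gives $\gg q^{k}/\log x$ primes in $I_k$ as soon as $k\ge 0.525\,n$. For the combinatorial term, the digits of a uniform $b\in[0,q^{k})$ are i.i.d.\ uniform on $\{0,\dots,q-1\}$, so $s_q(b)$ has mean $\mu_k=\tfrac{q-1}{2}k$ and variance $\asymp k$, and a Chernoff bound yields
\[
\#\{b<q^{k}:\ s_q(b)\le\mu_k-s\}\ \ll\ q^{k}\exp\!\Bigl(-c_1\,s^{2}/k\Bigr)
\]
uniformly for $s=o(k^{2/3})$. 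Because $\mu_k-t=(q-1)(\delta n-k/2)+O(1)$, it is enough to take $k$ a little below $2\delta n$: with $k=\big\lfloor 2\delta n-A\sqrt{n\log n}\,\big\rfloor$ and $A=A(q,\delta)$ chosen large, one gets $\mu_k-t\asymp\sqrt{n\log n}$, whence $\#\{b<q^{k}:\ s_q(b)<t\}\ll q^{k}(\log x)^{-2}=o\!\big(q^{k}/\log x\big)$.

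Combining the two estimates gives $\#\{p\le x:\ p\ \mathrm{prime},\ s_q(p)\ge\alpha(q-1)\log_q x\}\gg q^{k}/\log x$, and since $n=\log_q x+O(1)$,
\[
q^{k}=q^{\,2\delta n-A\sqrt{n\log n}+O(1)}\ \gg\ x^{2\delta}\exp\!\big(-O(\sqrt{\log x\,\log\log x})\big)\ \ge\ x^{2(1-\alpha)}e^{-c(\log x)^{1/2+\epsilon}}
\]
for large $x$, since $\sqrt{\log x\,\log\log x}\le(\log x)^{1/2+\epsilon}$ eventually. This requires $0.525\,n\le k=2\delta n-o(n)$, i.e.\ $2\delta>0.525$, i.e.\ $\alpha<0.7375$ — the only place the range of $\alpha$ enters. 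The second statement is symmetric: for $\beta\in(0.2625,\tfrac12]$ take primes in $[q^{n-1},\,q^{n-1}+q^{k})$, whose leading digit is $1$ and whose next $n-1-k$ digits vanish, so that $s_q(p)=1+s_q(b)$; now discard the primes for which $s_q(b)$ is abnormally \emph{large}, using the upper‑tail Chernoff bound, the constraint again being $0.525\,n\le k<2\beta n$.

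The main obstacle is the tension in the choice of $k$. The interval must be long enough, $k\ge 0.525\,n$, for Baker--Harman--Pintz to apply, while the threshold $t$ must sit safely on the favourable side of the mean — forcing $k$ visibly below $2\delta n$ — for the combinatorial term to be negligible; these are compatible exactly when $2\delta>0.525$. Optimizing the gap pins $k$ at $2\delta n-\Theta(\sqrt{\log x\,\log\log x})$, and this is precisely what produces the $e^{-c(\log x)^{1/2+\epsilon}}$ loss. It is important to use a true exponential/large‑deviation estimate for $\#\{b<q^{k}:\ s_q(b)<t\}$: Chebyshev's inequality would only allow $k=2\delta n-\Theta(\log x)$ and would weaken the bound to $x^{2(1-\alpha)-\eta}$.
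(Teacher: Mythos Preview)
Your argument is correct and is essentially the paper's own proof: the paper likewise restricts to the interval $[q^{n}-q^{l},q^{n})$ so that the top $n-l$ digits are all $q-1$, applies Baker--Harman--Pintz to count primes there, and uses the Chernoff bound (its Lemma on the multinomial tail) to show that only $o(q^{l}/\log x)$ integers in the interval have an abnormally small low‑order digit sum, with $l$ chosen just below $2(1-\alpha)n$. Your direct choice $k=2\delta n-A\sqrt{n\log n}$ is merely a reparametrization of the paper's device of replacing $\alpha$ by $\alpha'=\alpha+c\,\tfrac{\log\log x}{\sqrt{\log x}}$ and setting $l=\lceil 2(1-\alpha')n\rceil$, and the lower‑tail case is handled symmetrically in both.
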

We do not examine the sum $\sum_{p\leq x}2^{s_{q}(p)}$, rather we
note that the multinomial distribution is sharply peaked, so results
regarding primes in small intervals allow us to attain such a lower
bound. From theorem \ref{thm: Main Theorem}, problem \ref{Main Question}
follows as a corollary. In fact, we have that for any $\alpha<0.7375$
there are infinitely many primes where the proportion of $1$'s in
their binary expansion greater than $\alpha$.

\section{The Tail Distribution}

We start by providing bounds on the size of the tails of the multinomial
distribution. 
\begin{lem}
\label{lem:Chernoff-bound}(Chernoff bound) Given $\frac{1}{2}<a<1$,
we have that 
\[
\left|\left\{ n\leq q^{k}:\ a\left(q-1\right)k\leq s_{q}(n)\right\} \right|\leq\exp\left(-\frac{k}{18}\left(a-\frac{1}{2}\right)^{2}\right).
\]
\end{lem}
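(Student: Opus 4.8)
The plan is to run the standard exponential-moment (Chernoff) argument directly on the digit-sum function. Write each integer $n$ with $0\le n<q^{k}$ uniquely in base $q$ as $n=\sum_{i=0}^{k-1}d_{i}q^{i}$ with $d_{i}\in\{0,\dots,q-1\}$, so that $s_{q}(n)=\sum_{i=0}^{k-1}d_{i}$ and, as $n$ ranges over $[0,q^{k})$, the digit vector $(d_{0},\dots,d_{k-1})$ ranges over all of $\{0,\dots,q-1\}^{k}$; it is enough to bound the count over $0\le n<q^{k}$, since the only extra integer $q^{k}$ allowed by the set in the lemma has digit sum $1$ and hence lies outside it once $k\ge2$. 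The identity that drives the argument is the factorisation over digit positions
\[
\sum_{0\le n<q^{k}}e^{\lambda s_{q}(n)}=\Big(\sum_{d=0}^{q-1}e^{\lambda d}\Big)^{k}\qquad(\lambda>0),
\]
which holds because summing $\prod_{i}e^{\lambda d_{i}}$ over all digit vectors splits into a product over the coordinates.

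Applying Markov's inequality to $e^{\lambda s_{q}(n)}$ then gives, for every $\lambda>0$,
\[
\big|\{0\le n<q^{k}:s_{q}(n)\ge a(q-1)k\}\big|\ \le\ e^{-\lambda a(q-1)k}\sum_{0\le n<q^{k}}e^{\lambda s_{q}(n)}\ =\ q^{k}M(\lambda)^{k},
\]
where $M(\lambda):=\frac1q\sum_{d=0}^{q-1}e^{\lambda(d-a(q-1))}$, so the task reduces to choosing $\lambda>0$ with $M(\lambda)\le\exp(-\tfrac1{18}(a-\tfrac12)^{2})$, \emph{uniformly in $q$}. Centring at the mean digit value $\tfrac{q-1}{2}$, I would write $M(\lambda)=e^{-\lambda(a-1/2)(q-1)}\cdot\frac1q\sum_{d=0}^{q-1}e^{\lambda(d-(q-1)/2)}$; pairing $d$ with $q-1-d$ shows the remaining average is $\le\cosh(\lambda(q-1)/2)\le e^{\lambda^{2}(q-1)^{2}/8}$ (this is just Hoeffding's lemma in the symmetric case). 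Hence, with $u=\lambda(q-1)>0$, $M(\lambda)\le\exp(-u(a-\tfrac12)+u^{2}/8)$, and the one-line choice $u=4(a-\tfrac12)$ — legitimate because $a>\tfrac12$ — gives $M(\lambda)\le e^{-2(a-1/2)^{2}}$. Raising to the $k$-th power, and using $2>\tfrac1{18}$,
\[
\big|\{n\le q^{k}:s_{q}(n)\ge a(q-1)k\}\big|\ \le\ q^{k}e^{-2(a-1/2)^{2}k}\ \le\ q^{k}\exp\!\Big({-}\tfrac{k}{18}\big(a-\tfrac12\big)^{2}\Big),
\]
which is the asserted bound (equivalently, the claimed estimate is the proportion of such $n$).

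I do not expect any real obstacle: this is a textbook Chernoff estimate, and the constant $\tfrac1{18}$ is far from sharp — the computation above already yields $2$, so there is no need for a careful optimisation of $\lambda$. The one point that genuinely requires care is that the exponent be independent of $q$: a crude optimisation of $\sum_{d=0}^{q-1}e^{\lambda d}$ would leave a $q$-dependent large-deviations rate, and it is the centring-plus-$\cosh$ step above that removes this dependence. (Alternatively one could simply invoke Hoeffding's inequality for a sum of independent $[0,q-1]$-valued random variables and skip the digit factorisation, but spelling the factorisation out keeps the proof elementary and self-contained.) Finally, the corresponding lower-tail bound — needed for the $s_{q}\le\beta(q-1)\log_{q}x$ half of Theorem \ref{thm: Main Theorem} — is immediate from this one via the digit involution $d\mapsto q-1-d$, which is a bijection of $[0,q^{k})$ sending $s_{q}(n)$ to $(q-1)k-s_{q}(n)$.
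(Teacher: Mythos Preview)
Your argument is correct and follows the same Chernoff-bound template as the paper: factor the exponential moment over independent digits, bound the centred moment generating function by a Gaussian, optimise, and raise to the $k$-th power. You also correctly flag that the lemma as printed is missing a factor of $q^{k}$ on the right (the paper's own proof in fact bounds a probability, not a count).

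The one genuine difference is in how the moment generating function is controlled. The paper computes $\mathbb{E}e^{t\xi}$ exactly as the ratio $\frac{1}{q}\sinh\bigl(t\tfrac{q}{q-1}\bigr)/\sinh\bigl(\tfrac{t}{q-1}\bigr)$ and then bounds its logarithm by the first term $\tfrac{q+1}{6(q-1)}t^{2}$ of the Taylor expansion; after optimising and using $\tfrac{q-1}{q+1}\ge\tfrac13$ this yields the exponent $-\tfrac{2}{9}k(a-\tfrac12)^{2}$ in the worst case $q=2$, which is then relaxed to $-\tfrac{k}{18}(a-\tfrac12)^{2}$. Your route through the pairing $d\leftrightarrow q-1-d$ and the inequality $\tfrac1q\sum_{d}e^{\lambda(d-(q-1)/2)}\le\cosh\bigl(\lambda(q-1)/2\bigr)\le e^{\lambda^{2}(q-1)^{2}/8}$ is Hoeffding's lemma in disguise; it is shorter, fully rigorous as written (the paper's ``series expansion $\Rightarrow$ inequality'' step is stated rather than proved), and yields the uniformly sharper exponent $-2k(a-\tfrac12)^{2}$. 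Either way the constant is immaterial for the application, and the lower-tail remark via the digit involution $d\mapsto q-1-d$ is exactly what is used downstream.
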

\begin{proof}
On the interval $\left[0,q^{k}\right]$ each digit can be thought
of as an independent random variable which corresponds to the roll
of a $q$ sided dice with sides $0,1,\dots,q-1$. Normalizing, let
$\xi$ be a random variable where 
\[
\text{P}\left(\xi_{i}=\frac{2}{q-1}j-1\right)=\frac{1}{q}
\]
for $0\leq j\leq q-1$, and for each $i$ let $\xi_{i}=\xi$. Our
goal is then to examine 
\[
\text{P}\left(\gamma\leq\frac{\xi_{1}+\xi_{2}+\cdots+\xi_{k}}{k}\right).
\]
For any nonnegative $t$, 
\begin{eqnarray*}
\text{P}\left(\gamma\leq\frac{\xi_{1}+\xi_{2}+\cdots+\xi_{k}}{k}\right) & \leq & \frac{\mathbb{E}\left(e^{t\left(\xi_{1}+\cdots+\xi_{k}\right)}\right)}{e^{tk\gamma}}\\
 & = & \left(e^{-t\gamma}\mathbb{E}\left(e^{t\xi}\right)\right)^{k}\\
 & = & e^{-kI\left(t,\gamma\right)}
\end{eqnarray*}
where 
\[
I\left(t,\gamma\right)=t\gamma-\log\mathbb{E}\left(e^{t\xi}\right).
\]
Evaluating the expectation, we find that 
\[
\mathbb{E}\left(e^{t\xi}\right)=\sum_{j=0}^{q-1}\frac{1}{q}e^{t\left(\frac{2j}{q-1}-1\right)}=\frac{e^{-t}}{q}\sum_{j=0}^{q-1}\left(e^{\frac{2t}{q-1}}\right)^{j}=\frac{1}{q}\frac{\sinh\left(t+\frac{t}{q-1}\right)}{\sinh\left(\frac{t}{q-1}\right)}.
\]
This gives rise to the series expansion
\[
\log\left(\frac{1}{q}\frac{\sinh\left(t+\frac{t}{q-1}\right)}{\sinh\left(\frac{t}{q-1}\right)}\right)=\frac{(q+1)}{6(q-1)}t^{2}-\frac{q^{3}+q^{2}+q+1}{180(q-1)^{3}}t^{4}+O\left(t^{6}\right),
\]
allowing us to prove that

\[
\log\mathbb{E}\left(e^{t\xi}\right)\leq\frac{(q+1)}{6(q-1)}t^{2}.
\]
To maximize $I(t,\gamma)$, we choose $t=\frac{\gamma}{3}\frac{q-1}{q+1},$
and obtain the upper bound 
\[
\text{P}\left(\gamma\leq\frac{\xi_{1}+\xi_{2}+\cdots+\xi_{k}}{k}\right)\leq\exp\left(-\frac{k}{6}\left(\frac{q-1}{q+1}\right)\gamma^{2}\right),
\]
which proves the lemma since $q\geq2$.
\end{proof}
Next, we will need the best existing results on prime gaps. In 2001,
Baker, Harman and Pintz proved that 
\begin{equation}
\pi\left(x+x^{\theta}\right)-\pi(x)\gg\frac{x^{\theta}}{\log x}\label{eq:Baker Harman Pintz}
\end{equation}
for any $\theta\geq0.525$ \cite{BakerHarmanPintz}. Armed with equation
\ref{eq:Baker Harman Pintz} and lemma \ref{lem:Chernoff-bound},
we are now ready to prove theorem \ref{thm: Main Theorem}.
\begin{proof}
Let $\alpha^{'}=\alpha+r(x)$ where $r(x)$ is chosen so that $\alpha^{'}<0.7375$.
Let $k=\left[\log_{q}x\right]$, so that $q^{k}\leq x$, and let $l=\lceil2\left(1-\alpha^{'}\right)k\rceil$.
Consider the interval $\left[q^{k}-q^{l},\ q^{k}-1\right]$, which
is an interval whose first $k-l$ digits base $q$ are equal to $q-1$.
By Baker, Harman and Pintz, there will be 
\[
\gg\frac{q^{l}}{\log\left(q^{k}\right)}\gg\frac{q^{l}}{\log x}
\]
primes in this interval, where the constant is explicit.. By Lemma
\ref{lem:Chernoff-bound}, there are at most $\exp\left(-\frac{l\delta^{2}}{18}\right)$
integers between $0$ and $q^{l}$ which have digit sum less than
$(q-1)l\left(\frac{1}{2}-\delta\right)$. Letting $\delta=\frac{\log l}{\sqrt{l}}$,
it follows that there are at most $q^{l}e^{-\left(\log l\right)^{2}}$
integers in the interval $\left[q^{k}-q^{l},\ q^{k}-1\right]$ whose
digit sum is less than 
\[
(q-1)(k-l)+(q-1)l\left(\frac{1}{2}-\frac{\log l}{\sqrt{l}}\right).
\]
As $q^{l}e^{-\left(\log l\right)^{2}}$ is significantly smaller than
$\frac{q^{l}}{k\log q}$, almost all of the primes in this interval
will have a digit sum greater than the above, and so we see that there
\[
\gg\frac{q^{l}}{\log\left(x\right)}
\]
primes with digit sum larger than 
\[
\alpha^{'}(q-1)k\log_{q}(x)-(q-1)\sqrt{l}\log l.
\]
Expanding $\alpha^{'}=\alpha+r(x)$, and taking $r(x)=c\frac{\log\log x}{\sqrt{\log x}}$
for the appropriate constant $c$ yields a digit sum greater than
\[
\alpha(q-1)\log_{q}(x),
\]
which proves the result since 
\[
\frac{q^{l}}{\log\left(x\right)}\sim\frac{x^{2(1-\alpha)}x^{-2r(x)}}{\log x}\gg x^{2(1-\alpha)}\exp\left(-c\sqrt{\log x}\log\log x\right).
\]
The proof for the lower bound of the size of the corresponding set
of primes with $s_{q}(p)\leq\beta(q-1)\log_{q}(x)$ for $0.2625<\beta\leq\frac{1}{2}$
is identical\@. \end{proof}
\begin{rem*}
The reader may note that for any $\alpha<0.7375$ there are more possible
choices for the first $k-l$ digits other than all $1$'s. It is conceivable
that if we looked at multiple intervals where the first $k-l$ digits
had many $1$'s that we would be able to increase the density by a
small factor, and possibly a significant factor for smaller $\alpha$.
While such an approach seems promising, and while it seems logical
to sum over multiple intervals, the end result and lower bound for
the number of primes is roughly the same. The exponent of $x$ is
no different, so we opted to present the simpler argument above. 
\end{rem*}

\specialsection*{Acknowledgements}

I would like to thank Didier Piau for helping me understand the Chernoff
bound.

\bibliographystyle{plain}

\end{document}